\newtheorem{theorem}{Theorem}
\theoremstyle{plain}
\newtheorem{corollary}{Corollary}
\newtheorem{lemma}{Lemma}
\newtheorem{proposition}{Proposition}
\newtheorem{remark}{Remark}
\numberwithin{equation}{section}
\begin{document}
\title{Weak Commutativity Between two Isomorphic Polycyclic Groups.}
\author{Bruno C\' esar R. Lima}

\address{Secretaria de Estado Educa\c c\~ao do Distrito Federal, Bras\' ilia, DF, Brazil}
\email{bruno\_crlima@hotmail.com}
\author{Ricardo N. Oliveira }
\address{Universidade Federal de Goi\'as, Goi\^ania, Goi\'as, Brazil}
\email{ricardo@ufg.br}
\date{\today}
\subjclass{ 20E25 }
\keywords{weak commutativity, polycyclic groups, Schur multiplier}
\thanks{We are grateful to Professor S. Sidki for suggesting  the problem and for his support and encouragement.}
\thanks{The first author acknowledges support from PROCAD-CAPES for Sandwich Doctorate studies.} 
\thanks{The second author acknowledges support from PROCAD-CAPES for post-doctoral studies.}

\begin{abstract}
\hspace{0,09cm} The operator of weak commutativity between isomorphic groups $H$ and $H^{\psi }$ was defined by Sidki as 
\begin{equation*}
\chi (H)=\left\langle H\,H^{\psi }\mid \lbrack h,h^{\psi }]=1\,\forall
\,h\in H\right\rangle \text{.}
\end{equation*}%
It is known that the operator $\chi $ preserves group properties such as
finiteness, solubility and also nilpotency for finitely generated groups. We
prove in this work that $\chi $ preserves the properties of being polycyclic
and polycyclic by finite. As a consequence of this result, we conclude that
the non-abelian tensor square $H\otimes H$ of a group $H$, defined by Brown
and Loday, preserves the property polycyclic by finite. This last result
extends that of Blyth and Morse who proved that $H\otimes H$ is polycyclic
if $H$ is polycyclic.
\end{abstract}
\maketitle

\section{Introduction}

Given two groups $H$ and $H^{\psi }$ which are isomorphic via $\psi
:h\rightarrow h^{\psi }$, the following group construction was introduced
and analysed in \cite{sidki-wp} 
\begin{equation*}
\chi (H)=\left\langle H\,,H^{\psi }\mid \lbrack h,h^{\psi }]=1\,\forall
\,h\in H\right\rangle \text{.}
\end{equation*}

The weak commutativity group $\chi (H)$ maps onto $H$ by $h\rightarrow h$, $h^{\psi }\rightarrow h$
with kernel $L(H)=\left\langle h^{-1}h^{\psi },h\;\in H\right\rangle $ and
maps onto $H\times H$ by $h\rightarrow \left( h,1\right) ,h^{\psi
}\rightarrow \left( 1,h\right) $ with kernel $D(H)=[H,H^{\psi }]$. It is an
important fact that $L(H)$ and $D(H)$ commute. Let $T(H)$ be defined as the
subgroup of $H\times H\times H$ generated by $\{(h,h,1),(1,h,h)\mid h\in H\}$%
. Then $\chi (H)$ maps onto $T(H)$ by $h\rightarrow \left( h,h,1\right)$, $h^{\psi }\rightarrow \left( 1,h,h\right) $, with kernel $W(H)=L(H)\cap D(H)$%
, an abelian group. A further normal subgroup of $\chi (H)$ is $R(H)={%
[H,L(H),H^{\psi }]}$ where the quotient $\frac{W(H)}{R(H)}$ is isomorphic to
the Schur Multiplier $M(H)$. Proving properties of $\chi (H)$ depend
crucially upon understanding $R(H)$. Thus, if $H$ is polycyclic by finite
then so is $\frac{\chi (H)}{R(H)}$ and to prove more generally that $\chi
(H)$ is polycyclic by finite depends upon showing that $R(H)$ is finitely
generated.\ 

We will make use of the following facts from \cite[p. 201]{sidki-wp},

\begin{lemma}
\label{lema:R}\ The groups $D(H)$, $L(H),W(H)$ and $R(H)=[H,L(H),H^{\Psi }]$
satisfy:
\begin{enumerate}
\item[(i)] $R(H)\lhd \chi (H)$, $R(H)$ is $\Psi $-invariant;
\item[(ii)] $[W(H),H]\leq R(H)\leq W(H)\leq D(H);$
\item[(iii)] $[h_{1},h_{2}^{\psi }]^{h_{3}}=[h_{1},h_{2}^{\psi }]^{h_{3}^{\Psi }}$%
for any $h_{1},\;h_{2},\;h_{3}\;\in H.$
\item[(iv)] $D(H)$ centralizes $L(H);$
\item[(v)] $[h_{1},h_{2}^{\psi }]^{h_{3}}=[{h_{1}}^{h_{3}},({h_{2}}^{h_{3}})^{\psi
}]$ holds in in $\chi (H)$ mod $R(H)$, for any elements $h_1, h_2, h_3 \in H$.
\end{enumerate}
\end{lemma}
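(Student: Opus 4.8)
The plan is to obtain everything from commutator calculus inside $\chi(H)$, using only the defining relations $[h,h^\psi]=1$ together with the two facts already visible from the retractions in the introduction: $L(H)$ is the kernel of $\rho\colon\chi(H)\to H$ ($h\mapsto h$, $h^\psi\mapsto h$), so $L(H)\lhd\chi(H)$, and $D(H)$ is the kernel of $\chi(H)\to H\times H$, so $D(H)\lhd\chi(H)$. Write $\ell_g=g^{-1}g^\psi$ for the generators of $L(H)$ and $d_{12}=[h_1,h_2^\psi]$ for the generators of $D(H)$. The first point is that items (iii) and (iv) are equivalent and form the structural core. Indeed, (iii) is multiplicative in $d_{12}$ and so extends to all of $D(H)$; granting it, for $d\in D(H)$ one computes
\[
d^{\ell_g}=(d^{g^{-1}})^{g^\psi}=(d^{g^{-1}})^{g}=d,
\]
the middle equality being (iii) applied to $d^{g^{-1}}\in D(H)$, which is precisely (iv); conversely, since $g(g^\psi)^{-1}\in L(H)$, (iv) returns (iii). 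So it suffices to establish this single conjugation identity.

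To prove the core identity I would polarize the relation $[h,h^\psi]=1$. Substituting $h=h_1h_2$ and expanding by $[ab,c]=[a,c]^b[b,c]$ and $[a,bc]=[a,c][a,b]^c$ gives the fundamental relation
\[
[h_1,h_2^\psi]^{h_2}\,[h_2,h_1^\psi]^{h_2^\psi}=1,
\]
and its companions from further substitutions. The easy half of (iii) is visible through the third retraction $\chi(H)\to T(H)$ with kernel $W(H)$: since $d_{12}\mapsto(1,[h_1,h_2],1)$ while $h_3$ and $h_3^\psi$ both induce $[h_1,h_2]\mapsto[h_1,h_2]^{h_3}$ on the middle factor, one gets $d_{12}^{h_3}\equiv d_{12}^{h_3^\psi}\pmod{W(H)}$. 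Upgrading this congruence to the exact equality (iii), by repeated use of the fundamental relation and of $[h_3,h_3^\psi]=1$, is the intricate elementary computation that I expect to be the main obstacle; everything else is bookkeeping built on (iii) and (iv).

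Granting (iii)/(iv), the containments in (ii) come quickly. We have $R(H)=[H,L(H),H^\psi]\le L(H)$ because $L(H)\lhd\chi(H)$ forces $[H,L(H)]\le L(H)$ and then $[L(H),H^\psi]\le L(H)$; and $R(H)\le D(H)$ because under $\chi(H)\to H\times H$ the subgroup $[H,L(H)]$ maps into the first factor while $H^\psi$ maps into the second, so the outer commutator dies. Hence $R(H)\le L(H)\cap D(H)=W(H)\le D(H)$. For the normality in (i), note first that $D(H)$ centralizes $R(H)$ (as $R(H)\le W(H)\le L(H)$ and $[D(H),L(H)]=1$), and that $[[H,L(H)],D(H)]\le[L(H),D(H)]=1$. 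Conjugating a generator $[[h_1,\ell],h_2^\psi]$ of $R(H)$ by $h\in H$ and writing $(h_2^\psi)^h=h_2^\psi\,d'$ with $d'\in D(H)$, the two facts just recorded collapse the extra terms and give $R(H)^h\le R(H)$; since by (iii) conjugation by $H$ and by $H^\psi$ agree on $D(H)\ge R(H)$, $R(H)$ is normalized by $H^\psi$ as well, so $R(H)\lhd\chi(H)$. The $\psi$-invariance follows from the symmetry of the presentation: the involution swapping $H\leftrightarrow H^\psi$ is an automorphism of $\chi(H)$ fixing $L(H)$ (it sends $\ell_g\mapsto\ell_g^{-1}$) and carrying $[H,L(H),H^\psi]$ to $[H^\psi,L(H),H]$, which equals $R(H)$ by the symmetric form of the identities above.

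Finally, (v) is (iii) read modulo $R(H)$. Since $[h_1,h_2^\psi]^{h_3}=[h_1^{h_3},(h_2^\psi)^{h_3}]$ holds identically, it remains to replace $(h_2^\psi)^{h_3}$ by $(h_2^{h_3})^\psi=((h_2^\psi)^{h_3})^{\ell_{h_3}}$; the resulting discrepancy is built from $[\,(h_2^\psi)^{h_3},\ell_{h_3}]\in[H^\psi,L(H)]$ commuted with $h_1^{h_3}\in H$, i.e. it lies in $[H,L(H),H^\psi]\cdot[[H,H^\psi],L(H)]$, and the second factor is trivial by (iv), so the discrepancy lies in $R(H)$ by the three-subgroup lemma and the normality just proved. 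The one remaining clause, $[W(H),H]\le R(H)$, I would treat as a separate commutator computation: $W(H)$ is central in $D(H)$ by (iv), and using (iii) together with the fundamental relation one shows that the $H$-conjugates of an element of $W(H)$ differ from it only by elements of $R(H)$. This is exactly the step exhibiting $W(H)/R(H)$ as a trivial $H$-module, in accordance with its identification with the Schur multiplier $M(H)$, and it is the second place (after the core identity) where genuine care is required.
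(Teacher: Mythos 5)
Your proposal is not a complete proof: at the two places where all the real difficulty of the lemma lives, you explicitly defer the work rather than do it. First, the core identity (iii) (equivalently (iv) --- your argument that the two are equivalent is correct) is never established. You derive the polarized relation $[h_1,h_2^\psi]^{h_2}\,[h_2,h_1^\psi]^{h_2^\psi}=1$ and the weak version $d_{12}^{h_3}\equiv d_{12}^{h_3^\psi}\pmod{W(H)}$ via the retraction onto $T(H)$, but then say that upgrading this congruence to exact equality ``is the intricate elementary computation that I expect to be the main obstacle.'' That computation \emph{is} the lemma: as you yourself note, everything else in your write-up --- the containments $R\le W\le D$, the normality of $R(H)$, the Hall--Witt/three-subgroup argument giving $[H^\psi,L,H]\le R(H)$ (which also repairs your rather hand-waved ``symmetric form'' claim for $\psi$-invariance), and item (v) --- is bookkeeping conditioned on (iii)/(iv), and that bookkeeping checks out. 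Second, the clause $[W(H),H]\le R(H)$ in item (ii) is likewise only announced (``I would treat [it] as a separate commutator computation''), not carried out; it does not follow formally from (iii)/(iv) and is a second genuine computation, not a corollary of anything you proved.

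For comparison: the paper itself offers no proof of this lemma at all --- it quotes these facts from Sidki's 1980 paper (reference [9], p.~201), where (iii)/(iv) and $[W(H),H]\le R(H)$ are established by precisely the kind of multi-variable polarization of $[h,h^\psi]=1$ that you anticipate but do not execute. So your proposal is best described as a correct reduction of the lemma to its two hard kernels, with a citation-shaped hole where those kernels should be; to complete it you must either reproduce Sidki's computations or, as the paper does, cite them.
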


We have the following diagram of subgroups of $\chi (H)$   
\begin{equation*}
\xymatrix{ & & \chi(H) \ar[d]_{H/H'} \ar@/_0.6cm/[ddll]_{H\times H}
\ar@/^0.6cm/[dd]^{H} \\ & & {DL} \ar[dll] \ar[d] \\ {D} \ar[dd] & & {L}
\ar[d] \ar[ddll] \\ & & {L'W} \ar[d] \ar[ldd] \\ {W} \ar[rd]& & L' \ar[ld]\\
& W\cap L' \ar[d] & \\ & 1 & }
\end{equation*}

Our main result in this paper is

\begin{theorem}
Let $H$ be a group which is polycyclic by finite. Then the group $\chi (H)$
and the non-commutative tensor square $H\otimes H$ are also polycyclic by
finite.
\end{theorem}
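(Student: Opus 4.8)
The plan is to reduce the theorem to proving that $R(H)$ is finitely generated, since the introduction already tells us that $\chi(H)/R(H)$ is polycyclic by finite whenever $H$ is. Indeed, if I can show $R(H)$ is finitely generated, then because $R(H)\leq W(H)\leq D(H)$ and $D(H)$ centralizes $L(H)$ (Lemma \ref{lema:R}(iv)), the subgroup $R(H)$ sits inside the abelian-by-controlled part of $\chi(H)$; more importantly, a finitely generated normal subgroup inside a group whose quotient by it is polycyclic by finite yields a polycyclic-by-finite extension, because the class of polycyclic-by-finite groups is closed under extensions. So the whole problem collapses to a finiteness statement about the single module-like subgroup $R(H)$.

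First I would set up the action that turns $R(H)$ into a module. By Lemma \ref{lema:R}(ii) we have $[W(H),H]\leq R(H)\leq W(H)$, and $W(H)$ is abelian, so $W(H)/R(H)\cong M(H)$ is centralized by $H$; this means $H$ acts on the abelian group $R(H)$, and I want to view $R(H)$ as a module over the group ring $\mathbb{Z}[H]$ (or rather over $\mathbb{Z}[\bar H]$ for a suitable finite-index subgroup, to exploit polycyclicity). The generators of $R(H)=[H,L(H),H^{\psi}]$ are the basic commutators $[[h_1, \ell], h_2^{\psi}]$ with $\ell\in L(H)$; using the commutator identities in Lemma \ref{lema:R}(iii) and (v) I would rewrite these generators and show that, modulo $R(H)$ itself, the conjugation action of $H$ on these generators is controlled by the action on the generating set of $H$. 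The key computation is to express $R(H)$ as the image of a finitely generated $\mathbb{Z}[H]$-module, namely a suitable quotient of $L(H)\otimes_{\mathbb{Z}} (\text{something finite})$ acted on by $H$.

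The decisive step, and the main obstacle, is the finite generation of $R(H)$ as an abelian group. Since $H$ is polycyclic by finite, the integral group ring $\mathbb{Z}[H]$ is a Noetherian ring (this is the theorem of Philip Hall), so every finitely generated $\mathbb{Z}[H]$-module is Noetherian and in particular finitely generated as an abelian group once one knows it is finitely generated over $\mathbb{Z}[H]$ and that the relevant module is actually a module over a finitely generated piece. Concretely, I expect to produce a finite set of generators for $R(H)$ as a $\mathbb{Z}[H]$-module — essentially the commutators $[[x_i, x_j^{\psi}], x_k]$ running over generators $x_1,\dots,x_n$ of $H$ — and then invoke Hall's theorem to conclude that $R(H)$ is finitely generated as an abelian group. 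The hard part will be verifying that the chosen finite set really does generate $R(H)$ as a module, which requires pushing the identities (iii) and (v) of Lemma \ref{lema:R} through systematically to reduce an arbitrary generator $[[h_1,\ell],h_2^{\psi}]$ to a $\mathbb{Z}[H]$-combination of the finite set, working always modulo the already-established relations inside $R(H)$.

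Once $R(H)$ is finitely generated, the theorem for $\chi(H)$ follows from closure of polycyclic-by-finite groups under extensions. For the tensor square, I would invoke the known relationship between $\chi(H)$ and $H\otimes H$: the non-abelian tensor square embeds as a section of (or is closely tied to) the structure of $\chi(H)$ via $D(H)$ and $W(H)$, so polycyclic-by-finiteness of $\chi(H)$ transfers to $H\otimes H$. Since subgroups and quotients of polycyclic-by-finite groups are again polycyclic by finite, identifying $H\otimes H$ with an appropriate subquotient of $\chi(H)$ completes the argument and recovers Blyth–Morse as the polycyclic (torsion-free) special case.
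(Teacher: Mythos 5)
Your reduction is sound and is exactly the strategy hinted at in the paper's introduction: $R(H)$ is abelian (it lies in $W(H)$), the quotient $\chi(H)/R(H)$ is polycyclic-by-finite when $H$ is, and the class of polycyclic-by-finite groups is closed under extensions, so everything rests on proving that $R(H)$ is finitely generated \emph{as an abelian group}. The genuine gap is your ``decisive step.'' Hall's theorem that $\mathbb{Z}[H]$ is Noetherian for polycyclic-by-finite $H$ gives that submodules of finitely generated $\mathbb{Z}[H]$-modules are finitely generated \emph{as modules}; it never converts finite generation over $\mathbb{Z}[H]$ into finite generation over $\mathbb{Z}$. A counterexample sits inside this very paper: the augmentation ideal ${\mathcal{A}}_{\mathbb{Z}}(H)$ is generated as a $\mathbb{Z}[H]$-module by the finitely many elements $a_i-1$, yet it is free abelian of infinite rank whenever $H$ is infinite. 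So even after you verify (correctly, and in the spirit of Propositions \ref{prop:gerR} and \ref{prop:R}) that $R(H)$ is a finitely generated module --- in fact the action is even better than you claim: since $R(H)\leq W=L\cap D$ and $[D,L]=1$ by Lemma \ref{lema:R}(iv), both $D$ and $L$ centralize $R(H)$, so conjugation factors through $\chi(H)/DL\cong H/H'$ --- you have proved nothing about $R(H)$ qua abelian group. Your hedge ``the relevant module is actually a module over a finitely generated piece'' is precisely the missing content: you would need the action on $R(H)$ to be trivial, or to factor through a finite group, and neither your argument nor the paper provides that.

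What actually closes this gap in the paper is a different tool, and in fact the paper never proves finite generation of $R(H)$ directly. It works with $L=L(H)$: Proposition \ref{prop:L/L'fg} identifies $L/L'$ with ${\mathcal{A}}_{\mathbb{Z}}(H)/I_{2}(H)$ and shows this is finitely generated; hence $L/(L'\cap W)$ is polycyclic-by-finite; then, since $W=L\cap D$ and $[D,L]=1$ force $L'\cap W\leq L'\cap Z(L)$, Schur's theorem embeds $L'\cap W$ into the Schur multiplier $M\bigl(L/(L'\cap W)\bigr)$, which is finitely generated because $L/(L'\cap W)$ is finitely presented. That Schur-multiplier argument is exactly the device that converts ``central, with polycyclic-by-finite quotient'' into genuine finite generation --- the job the Noetherian argument cannot do; if you tried to rescue your route you would end up reproducing it, since $W/(W\cap L')$ embeds in $L/L'$ and $W\cap L'$ is handled by Schur, giving $W$ and hence $R(H)$ finitely generated. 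Finally, your last paragraph is also too loose: $H\otimes H$ is not a subquotient of $\chi(H)$. The correct bridge is Rocco's isomorphism $\chi(H)/R(H)\cong\nu(H)/\Delta(H)$ together with the fact that $\Delta(H)$ is a finitely generated abelian normal subgroup of $\nu(H)$; this makes $\nu(H)$ polycyclic-by-finite by extension closure, and then $H\otimes H\cong\tau(H)\leq\nu(H)$ inherits the property as a subgroup.
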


In last subsection, we discuss different behaviours of $R\left( H\right) $,
specially for certain polycyclic groups $H$.

\section{Connection with the augmentation ideal of $\mathbb{Z}(H)$}

Given a group $H$ with identity $1$, consider the group ring $\mathbb{Z}(H)$
and its augmentation ideal ${\mathcal{A}}_{\mathbb{Z}}(H)$ which is
generated by $\{h-1\mid h\in H\}$. Consider also $I_{2}(H)$ the ideal of $%
\mathbb{Z}(H)$ generated by $\{(h-1)^{2}\mid h\in H\}$. Let $\tilde{G}=%
\mathbb{Z}(H)\rtimes H$ be semidirect product of $\mathbb{Z}(H)$ by $H$,
where $\mathbb{Z}(H)$ is written additively and conjugation of $\mathbb{Z}%
(H) $ by $H$ is described as right multiplication 
\begin{equation*}
(\sum {(x_{h}h,1)})^{(0,h_{1})}=\sum {(x_{h}hh_{1},1)}\text{.}
\end{equation*}%
In addition, let $G$ be the subgroup of $\tilde{G}$ generated by ${\mathcal{A%
}}_{\mathbb{Z}}(H)\times \{1\}$ and $\{0\}\times H$. Then, $N=I_{2}(H)\times
\{1\}$ is a normal subgroup of $\tilde{G}$.

\begin{proposition}
\label{prop:L/L'fg} Let $L=L(H)$ and $L^{\prime }$ be its derived subgroup.
Then the application 
\begin{equation*}
\epsilon :H\cup H^{\psi }\rightarrow \frac{G}{N}
\end{equation*}%
defined by $\epsilon :h\rightarrow N(0,h)$ and $h^{\psi }\rightarrow
N(0,h)^{(1,1)},\;\;\forall h\;\in H$, extends to an epimorphism 
\begin{equation*}
\epsilon :\chi (H)\rightarrow \frac{G}{N}
\end{equation*}%
with kernel $L^{\prime }$. Furthermore, if $H$ is a finitely generated group
then so is the quotient group $\frac{L}{L^{\prime }}$.

\begin{proof}
Since, for $h\in H$ 
\begin{eqnarray}
\lbrack (0,h),(1,1)] &=&(0,h)^{-1}(0,h)^{(1,1)}  \notag  \label{eq:com_hu} \\
&=&(1,1)^{(0,-h)}(1,1)  \notag \\
&=&({-h+1},1)\in {\mathcal{A}}_{\mathbb{Z}}(H)\times \{1\}\text{,}
\end{eqnarray}%
we obtain 
\begin{equation*}
{\mathcal{A}}_{\mathbb{Z}}(H)\times \{1\}=[\{0\}\times H,(1,1)]\text{ }
\end{equation*}%
and%
\begin{eqnarray*}
G &=&\left\langle \{1\}\times H,\;(\{1\}\times H)^{(1,1)}\right\rangle \\
&\cong &\left\langle H,\;H^{\phi }\mid \;[H,\phi ]^{^{\prime
}}=1\right\rangle
\end{eqnarray*}%
by \cite[p. 189]{sidki-wp}. Given $h$ $\in H$ we conclude that in $G$, 
\begin{eqnarray*}
\lbrack (0,h),(0,h)^{(1,1)}] &=&1\Leftrightarrow \lbrack
(1,1),(0,h),(0,h)]=1, \\
\lbrack (1,1),(0,h),(0,h)] &=&((h-1)^{2},1)\text{.}
\end{eqnarray*}%
Therefore 
\begin{equation*}
\epsilon :h\rightarrow N(0,h),h^{\psi }\rightarrow N(0,h)^{(1,1)}
\end{equation*}%
extends to an epimorphism $\epsilon :\chi (H)\rightarrow \frac{G}{N}$ and $%
Ker_{\epsilon }=L^{\prime }$.

As 
\begin{equation*}
\chi (H)=L\cdot H\text{ and }L^{\epsilon }=\dfrac{{\mathcal{A}}_{\mathbb{Z}%
}(H)\times \{1\}}{N}
\end{equation*}%
it follows that 
\begin{equation*}
\frac{L}{L^{\prime }}\cong \dfrac{{\mathcal{A}}_{\mathbb{Z}}(H)}{I_{2}(H)}%
\text{.}
\end{equation*}

Let $S=\{a_{1},\ldots ,a_{n}\}$ be a generating set for $H$. The following
equations hold in ${\mathcal{A}}_{\mathbb{Z}}(H)$ modulo $I_{2}(H)$ for all $%
a_{i},\;a_{j}\in S$: 
\begin{eqnarray*}
(a_{i}-1)^{2} &=&0,\text{ }a_{i}^{2}=2a_{i}-1, \\
a_{i}^{k} &=&ka_{i}-(k-1)\;\forall \;k\in \mathbb{Z}; \\
(a_{j}a_{i})^{-1} &=&2-a_{j}a_{i}, \\
a_{i}^{-1}a_{j}^{-1} &=&(2-a_{i})(2-a_{j})=a_{i}a_{j}-2a_{i}-2a_{j}+4, \\
a_{j}a_{i} &=&-a_{i}a_{j}+2a_{j}+2a_{i}-2\text{.}
\end{eqnarray*}

We conclude that $\left\{ I_{2}(H)+a_{i_{1}}a_{i_{2}}\cdots a_{i_{s}}\mid
1\leq i_{1}<i_{2}<\ldots i_{s}\leq n\right\} $ generates the abelian group $%
\frac{\mathbb{Z}(H)}{I_{2}(H)}$ and in particular, $\frac{{\mathcal{A}}(H)}{%
I_{2}(H)}$ is a finitely generated.
\end{proof}
\end{proposition}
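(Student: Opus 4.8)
The plan is to establish the statement in three stages: that $\epsilon$ respects the defining relations of $\chi(H)$ and so extends to a homomorphism; that the resulting homomorphism is surjective with kernel exactly $L'$; and finally that $L/L'$ is finitely generated when $H$ is.

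Since $\chi(H)$ is presented on the generating set $H\cup H^\psi$ subject only to the relations $[h,h^\psi]=1$ for $h\in H$, to extend $\epsilon$ to a homomorphism I would only need to check that the prescribed images obey these relations in $G/N$, i.e. that $[(0,h),(0,h)^{(1,1)}]$ lies in $N$ for every $h\in H$. Working inside $\tilde G=\mathbb{Z}(H)\rtimes H$, this commutator is trivial precisely when the triple commutator $[(1,1),(0,h),(0,h)]$ is, and the latter can be evaluated directly from the rule that conjugation acts as right multiplication on the additive factor; the outcome is $((h-1)^2,1)$, which lies in $N=I_2(H)\times\{1\}$ by the very definition of $I_2(H)$. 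Hence the relations hold modulo $N$ and $\epsilon$ extends to a homomorphism on all of $\chi(H)$.

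For surjectivity, the identity $[(0,h),(1,1)]=(1-h,1)$ shows that the images $N(0,h)$ and $N(0,h)^{(1,1)}$, together with the cosets of $\{0\}\times H$, already generate $(\mathcal{A}_{\mathbb{Z}}(H)\times\{1\})/N$ and hence all of $G/N$. To pin down the kernel I would invoke Sidki's identification $G\cong\langle H,H^\phi\mid[H,\phi]'=1\rangle$ from \cite[p.~189]{sidki-wp}. Since $N=I_2(H)\times\{1\}$ is generated by the elements $((h-1)^2,1)$, which under this identification are the images of the weak-commutativity relators $[h,h^\phi]$, passing to $G/N$ imposes exactly the relations $[h,h^\phi]=1$. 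Thus $G/N\cong\langle H,H^\phi\mid[H,\phi]'=1,\ [h,h^\phi]=1\rangle$; identifying $h^\phi$ with $h^\psi$ and recalling $L=\langle h^{-1}h^\psi\mid h\in H\rangle$, this group is precisely $\chi(H)/L'$, whence $\ker\epsilon=L'$. Combining $\chi(H)=L\cdot H$ with $\epsilon(L)=(\mathcal{A}_{\mathbb{Z}}(H)\times\{1\})/N$ then yields the isomorphism $L/L'\cong\mathcal{A}_{\mathbb{Z}}(H)/I_2(H)$.

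The main obstacle I anticipate is the concluding finite-generation claim, which amounts to showing that $\mathcal{A}_{\mathbb{Z}}(H)/I_2(H)$ is a finitely generated abelian group when $H=\langle a_1,\dots,a_n\rangle$. My plan here is purely computational inside the ring $\mathbb{Z}(H)/I_2(H)$: from $(a_i-1)^2\equiv0$ one gets $a_i^2\equiv2a_i-1$ and, inductively, $a_i^k\equiv ka_i-(k-1)$ for all $k\in\mathbb{Z}$, together with the ``anticommutation up to lower order'' relation $a_ja_i\equiv -a_ia_j+2a_i+2a_j-2$ and its analogues for inverses. Using these I would reduce an arbitrary monomial in the $a_i^{\pm1}$ to a $\mathbb{Z}$-linear combination of ordered square-free products $a_{i_1}\cdots a_{i_s}$ with $i_1<\cdots<i_s$. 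The delicate point is to verify that this rewriting is exhaustive and terminates, so that the finitely many cosets $I_2(H)+a_{i_1}\cdots a_{i_s}$ span $\mathbb{Z}(H)/I_2(H)$ as an abelian group; restricting to the augmentation part then shows that $\mathcal{A}_{\mathbb{Z}}(H)/I_2(H)$, and therefore $L/L'$, is finitely generated.
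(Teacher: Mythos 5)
Your proposal is correct and follows essentially the same route as the paper: the same commutator computations in $\mathbb{Z}(H)\rtimes H$ showing that the relators $[h,h^{\psi}]$ correspond to $((h-1)^{2},1)\in N$, the same appeal to Sidki's identification of $G$ with $\left\langle H,H^{\phi}\mid [H,\phi]'=1\right\rangle$ to conclude $\ker\epsilon=L'$ and $L/L'\cong {\mathcal{A}}_{\mathbb{Z}}(H)/I_{2}(H)$, and the same rewriting of monomials modulo $I_{2}(H)$ into ordered square-free products to obtain finite generation. Nothing essential is missing relative to the paper's own argument.
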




\begin{lemma}
(\cite{stambach}) Let $H$ a group finitely presented. Then the Schur
Multiplier of $H$ is finitely generated.
\end{lemma}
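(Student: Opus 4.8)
The plan is to identify the Schur multiplier $M(H)$ with the second integral homology group $H_2(H,\mathbb{Z})$ and then exploit the finite presentation homologically rather than at the level of group rings. First I would fix a finite presentation of $H$, that is, an exact sequence
\begin{equation*}
1 \to R \to F \to H \to 1,
\end{equation*}
where $F$ is free of finite rank $n$ and $R$ is the normal closure in $F$ of finitely many relators $r_1,\dots,r_m$. By Hopf's formula one has
\begin{equation*}
M(H) \cong \frac{R \cap [F,F]}{[F,R]} \cong H_2(H,\mathbb{Z}),
\end{equation*}
which connects the statement to the group-theoretic setting of this paper while also making the homological tools available.

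The key step is to compute $H_2(H,\mathbb{Z})$ from a free resolution of $\mathbb{Z}$ over the group ring $\mathbb{Z}(H)$ that is finitely generated in low degrees. Because $H$ is finitely presented, the presentation above provides the beginning of such a resolution,
\begin{equation*}
\cdots \to \mathbb{Z}(H)^{m} \xrightarrow{\,\partial_2\,} \mathbb{Z}(H)^{n} \xrightarrow{\,\partial_1\,} \mathbb{Z}(H) \to \mathbb{Z} \to 0,
\end{equation*}
where the $n$ free generators in degree $1$ record the generators of $H$ and the $m$ free generators in degree $2$ record the relators $r_1,\dots,r_m$ (concretely, this is the cellular chain complex of the universal cover of the presentation $2$-complex of $H$). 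I would then apply $-\otimes_{\mathbb{Z}(H)}\mathbb{Z}$ to obtain a complex of abelian groups
\begin{equation*}
\cdots \to \mathbb{Z}^{m} \xrightarrow{\,\ov{\partial}_2\,} \mathbb{Z}^{n} \xrightarrow{\,\ov{\partial}_1\,} \mathbb{Z} \to 0,
\end{equation*}
whose second homology is exactly $H_2(H,\mathbb{Z}) \cong M(H)$.

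Finally I would read off finite generation directly: $M(H)$ is the subquotient $\ker(\ov{\partial}_2)/\operatorname{im}(\ov{\partial}_3)$ of the finitely generated free abelian group $\mathbb{Z}^m$. Since $\mathbb{Z}$ is Noetherian, every subgroup and every quotient of a finitely generated abelian group is again finitely generated; hence $\ker(\ov{\partial}_2)$ is finitely generated, and therefore so is $M(H)$. The point I expect to be the main obstacle — and the reason one cannot argue more naively straight from Hopf's formula — is that $\mathbb{Z}(H)$ need not be Noetherian when $H$ is infinite, so the relation module $R/[F,R]$ occurring in Hopf's formula is in general \emph{not} finitely generated as an abelian group, even though it is finitely generated as a $\mathbb{Z}(H)$-module. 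The device that circumvents this is precisely the passage to $-\otimes_{\mathbb{Z}(H)}\mathbb{Z}$: after tensoring down, only finite generation over the Noetherian ring $\mathbb{Z}$ is required, and this is guaranteed by the finiteness of the generating and relator sets coming from the finite presentation.
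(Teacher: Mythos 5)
Your argument is correct, but there is nothing in the paper to compare it against: the paper states this lemma as a quoted result, attributed to Stammbach's article on integral homology, and uses it as a black box in the proof of Theorem~\ref{teo:chipoly}. What you have written is the standard self-contained justification for that black box. The chain of steps is sound: Hopf's formula identifies $M(H)$ with $H_2(H,\mathbb{Z})$; the cellular chain complex of the universal cover of the presentation $2$-complex gives an exact sequence of free $\mathbb{Z}(H)$-modules $\mathbb{Z}(H)^m \rightarrow \mathbb{Z}(H)^n \rightarrow \mathbb{Z}(H) \rightarrow \mathbb{Z} \rightarrow 0$ (exactness at the two right-hand spots holds because the cover is simply connected), which one continues to a full free resolution by choosing a free cover of $\ker \partial_2$; after applying $-\otimes_{\mathbb{Z}(H)}\mathbb{Z}$, the group $H_2(H,\mathbb{Z})$ is a subquotient of $\mathbb{Z}^m$, hence finitely generated. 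Your closing diagnosis is also accurate: the naive route through Hopf's formula alone fails because the relation module $R/[F,R]$ is finitely generated only over $\mathbb{Z}(H)$, which need not be Noetherian, and tensoring down to $\mathbb{Z}$ is exactly what converts module-finiteness into finiteness over a Noetherian base. One point worth making explicit rather than implicit: the degree-$3$ term hidden in your ``$\cdots$'' need not be finitely generated, but this is harmless, since $M(H)$ is a quotient of $\ker(\overline{\partial}_2) \leq \mathbb{Z}^m$ regardless of what maps onto that kernel. In short, your proposal would upgrade the paper's external citation to a self-contained proof using only standard homological algebra, at the cost of about a page; the paper's choice to cite buys brevity and nothing else.
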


\begin{theorem}
\label{teo:chipoly} If $H$ be a polycyclic (polycyclic by finite) group then
so is $\chi (H)$.

\begin{proof}
We will prove the assertion for $H$ polycyclic by finite; the proof for $H$
polycyclic is similar. Denote $L\left( H\right) =L,D\left( H\right) =D$. It
follows directly from the above diagram of subgroups that 
\begin{equation*}
\frac{\chi (H)}{D}\cong H\times H\text{, }\frac{H}{H^{\prime }}\cong \frac{%
\chi (H)}{DL}\cong \frac{\frac{\chi (H)}{D}}{\frac{DL}{D}}\text{\quad \quad }
\end{equation*}%
and 
\begin{equation*}
\frac{DL}{D}\cong \frac{L}{W}\cong H^{\prime }\cdot H
\end{equation*}%
are all polycyclic by finite. Since $\frac{L}{L^{\prime }},\frac{L^{\prime }W%
}{W}\left( \cong \frac{L^{\prime }}{L^{\prime }\cap W}\right) $ are finitely
generated groups, we conclude that $\frac{L}{L^{\prime }\cap W}$ is
polycyclic by finite. Since $W=D\cap L$ and $\left[ D,L\right] =1$, it
follows that $L^{\prime }\cap W\leq L^{\prime }\cap Z(L)$. Therefore, by a
theorem of Schur \cite[p. 19]{Karpilovsky}, the group $L^{\prime }\cap W$ is
isomorphic to a subgroup of the Schur Multiplier of $\frac{L}{L^{\prime
}\cap W}$. Since $\frac{L}{L^{\prime }\cap W}$ is polycyclic by finite, by
the above lemma, it follows that $M(\frac{L}{L^{\prime }\cap W})$ is
finitely generated. Therefore, $L^{\prime }\cap W$ is finitely generated and
consequently, $L$ is polycyclic by finite and finally so is $\chi (H)$.
\end{proof}
\end{theorem}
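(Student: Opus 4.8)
The plan is to reduce the statement to a single claim about the kernel $L = L(H)$ of the projection $\chi(H) \to H$, $h \mapsto h$, $h^{\psi} \mapsto h$. Since this projection is onto $H$ we have $\chi(H)/L \cong H$, so $\chi(H) = L \cdot H$; as the class of polycyclic-by-finite groups is closed under forming extensions, it suffices to show that $L$ is polycyclic-by-finite. The whole argument then consists of analysing $L$ through the normal series visible in the subgroup diagram, working down to the abelian subgroup $W = W(H) = L \cap D$.

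First I would collect the quotients that are polycyclic-by-finite almost for free. Since $\chi(H)/D \cong H \times H$, this quotient is polycyclic-by-finite, and as $L/W \cong DL/D$ is a subgroup of $\chi(H)/D$, it too is polycyclic-by-finite. Next, Proposition~\ref{prop:L/L'fg} gives that $L/L'$ is finitely generated abelian, this being exactly where the finite generation of $H$ is used, through the identification $L/L' \cong \mathcal{A}_{\mathbb{Z}}(H)/I_2(H)$. The subgroup $L'W/W \cong L'/(L' \cap W)$, being a subgroup of the polycyclic-by-finite group $L/W$, is itself polycyclic-by-finite. Regarding $L/(L' \cap W)$ as an extension of $L'/(L' \cap W)$ by $L/L'$, I would then conclude that $L/(L' \cap W)$ is polycyclic-by-finite.

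The crux is to prove that the bottom term $L' \cap W$ is finitely generated. The essential structural input is that $D$ centralizes $L$ (Lemma~\ref{lema:R}(iv)) and that $W = D \cap L$, which together force $W \leq Z(L)$, so that $L' \cap W \leq L' \cap Z(L)$: the group $L' \cap W$ is central in $L$ and contained in the derived subgroup $L'$. By a classical theorem of Schur, such a subgroup is isomorphic to a subgroup of the Schur multiplier $M\big(L/(L' \cap W)\big)$. Since $L/(L' \cap W)$ is polycyclic-by-finite it is finitely presented, so by the above lemma of Stammbach its Schur multiplier is a finitely generated abelian group; hence $L' \cap W$, being isomorphic to a subgroup of it, is finitely generated. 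I expect this to be the main obstacle, because it is the one place where a purely finiteness-theoretic fact about the multiplier must be converted into finite generation of a concrete central subgroup, and it is exactly the point flagged in the introduction as requiring control of $R(H)$.

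With $L' \cap W$ finitely generated abelian and $L/(L' \cap W)$ polycyclic-by-finite, the extension $1 \to L' \cap W \to L \to L/(L' \cap W) \to 1$ shows $L$ is polycyclic-by-finite, and then $1 \to L \to \chi(H) \to H \to 1$ yields the theorem. The polycyclic case is handled identically, reading "polycyclic" for "polycyclic-by-finite" throughout and again using that the Schur multiplier of a polycyclic group is finitely generated.
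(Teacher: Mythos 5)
Your proposal is correct and follows essentially the same route as the paper: reduce to $L$ via $\chi(H)=L\cdot H$, show $L/(L'\cap W)$ is polycyclic-by-finite using the quotients from the subgroup diagram together with Proposition~\ref{prop:L/L'fg}, and then obtain finite generation of $L'\cap W$ from $L'\cap W\leq L'\cap Z(L)$ via Schur's theorem and Stammbach's lemma on the multiplier. The only difference is presentational, in that you spell out the extension arguments the paper leaves implicit.
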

\newpage
\section{Connection with the non-commutative tensor square}

We recall the non-commutative tensor square introduced by Brown-Loday
\cite{BL} 
$$
H\otimes H=\left< h_{1}\otimes h_{2} \right.  \mid h_{1}h_{2}\otimes h_{3}=({h_{1}}%
^{h_{2}}\otimes {h_{3}}^{h_{2}})(h_{2}\otimes h_{3}),
$$
$$\hspace{3,1cm}
h_{1}\otimes h_{2}h_{3}=(h_{1}\otimes h_{3})({h_{1}}^{h_{2}}\otimes {h_{3}}%
^{h_{2}})\;\forall \;h_{1},\;h_{2},\;h_{3}\in H\left> \right.
$$
and the group defined by N. Rocco \cite{rocco-crnts} 
$$
\nu (H)=\left\langle H,H^{\psi }\mid \lbrack h_{1},h_{2}^{\psi
}]^{h_{3}^{\psi }}=[h_{1},h_{2}^{\psi
}]^{h_{3}}=[h_{1}^{h_{3}},(h_{2}^{h_{3}})^{\psi }], \; \forall h_1,\; h_2,\;
h_3 \in H\right\rangle \text{.}
$$

Rocco showed in \cite{rocco-crnts} that there exist an isomorphism between
the subgroup $\tau (H)=[H,H^{\psi }]$ of $\nu (H)$ and $H\otimes H$.
Moreover Brown-Loday \cite{BL} showed that for $J(H)$, is the kernel of the
epimorphism $\tau (H)\rightarrow H^{\prime }$ defined by $[h_{1},h_{2}^{\psi
}]\mapsto \lbrack h_{1},h_{2}]$, and for $\Delta (H)=\left\langle [h,h^{\psi
}]\mid h\in H\right\rangle $, then the Schur Multiplier $M(H)$ is isomorphic
the quotient $J(H)/\Delta (H)$.

It was shown in \cite{bm} that if $H$ is polycyclic then so is $\nu (H)$.
Thus, the following corollary generalizes this result.

\begin{corollary}
Let $H$ be a polycyclic by finite group. Then $\nu (H)$ and $H\otimes H$ are
polycyclic by finite groups.
\begin{proof}
By \cite[p. 68-69]{rocco-crnts} we have that $\frac{\chi (H)}{R(H)}\cong 
\frac{\nu (H)}{\Delta (H)}$, where the diagonal group $\Delta (H)=\left\langle [h,h^{\psi
}]\mid h\in H\right\rangle $ is a normal finitely generated abelian subgroup
of $\nu(H)$ and $R(H)=[H,L,H^{\psi }]$ is a subgroup of $\chi (H)$.
\end{proof}
\end{corollary}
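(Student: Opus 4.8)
The plan is to obtain the corollary as a transfer of Theorem \ref{teo:chipoly} across Rocco's isomorphism, using only that the class of polycyclic by finite groups is closed under passing to subgroups, quotients, and extensions. Concretely, I would push the conclusion ``$\chi(H)$ is polycyclic by finite'' down to the quotient $\chi(H)/R(H)$, identify this quotient with $\nu(H)/\Delta(H)$, and then rebuild $\nu(H)$ as an extension of a polycyclic by finite group by the abelian group $\Delta(H)$. The statement for $H\otimes H$ then follows at once, since $H\otimes H$ is isomorphic to a subgroup of $\nu(H)$.

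First I would apply Theorem \ref{teo:chipoly}: as $H$ is polycyclic by finite, so is $\chi(H)$, and hence so is every quotient of $\chi(H)$, in particular $\chi(H)/R(H)$. By the isomorphism $\chi(H)/R(H)\cong\nu(H)/\Delta(H)$ recorded in \cite{rocco-crnts}, the quotient $\nu(H)/\Delta(H)$ is therefore polycyclic by finite.

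Next I would analyse the extension $1\to\Delta(H)\to\nu(H)\to\nu(H)/\Delta(H)\to 1$. Since $\Delta(H)$ is finitely generated abelian---and therefore polycyclic---and the quotient $\nu(H)/\Delta(H)$ is polycyclic by finite, closure of the class under extensions yields that $\nu(H)$ is polycyclic by finite. Finally, $H\otimes H\cong\tau(H)=[H,H^{\psi}]$ is a subgroup of $\nu(H)$, so by closure under subgroups it too is polycyclic by finite.

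The argument is short precisely because the two hard inputs are already in hand. Just as the proof of Theorem \ref{teo:chipoly} hinged on the finite generation of $R(H)$, the present argument hinges on the finite generation of $\Delta(H)$: this is the one place where genuine structure of $\nu(H)$ enters, and I would draw it---together with the isomorphism $\chi(H)/R(H)\cong\nu(H)/\Delta(H)$ and the identification $\tau(H)\cong H\otimes H$---from \cite{rocco-crnts}. Everything else is the routine verification that the class of polycyclic by finite groups is closed under the three standard operations of forming subgroups, quotients, and extensions.
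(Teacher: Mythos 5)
Your proposal is correct and follows essentially the same route as the paper: the paper's (very terse) proof invokes exactly the same ingredients---Theorem \ref{teo:chipoly} for $\chi(H)$, Rocco's isomorphism $\chi(H)/R(H)\cong\nu(H)/\Delta(H)$, the fact that $\Delta(H)$ is a normal finitely generated abelian subgroup of $\nu(H)$, and the identification $H\otimes H\cong\tau(H)\leq\nu(H)$---leaving the closure-under-quotients, extensions, and subgroups steps implicit. You have simply written out in full the argument the paper compresses into one sentence.
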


\section{The subgroup $R(H)$ for a polycyclic group $H$}

\begin{proposition}
\label{prop:gerR} Let $H$ be a group and $T$ a transversal for $H^{\prime }\ 
$in $H$. Then, 
\begin{equation*}
R(H)={\left\langle [h_{1},h_{2}^{\psi }]^{h_{3}}[{h_{1}}^{h_{3}},({h_{2}}%
^{h_{3}})^{\psi }]^{-1}\mid h_{1},\;h_{2},\;h_{3}\;\in H\right\rangle }^{T}.
\end{equation*}
\end{proposition}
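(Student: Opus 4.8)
The plan is to prove the displayed identity for $R(H)$ by showing that the right-hand side $N := {\langle [h_1,h_2^\psi]^{h_3}[{h_1}^{h_3},({h_2}^{h_3})^\psi]^{-1} \mid h_1,h_2,h_3 \in H\rangle}^T$ (the normal closure in $\chi(H)$ of the stated generators, under conjugation by a transversal $T$) coincides with $R(H) = [H,L(H),H^\psi]$. I would establish this by a two-way inclusion. The motivation is Lemma~\ref{lema:R}(v), which asserts precisely that $[h_1,h_2^\psi]^{h_3} \equiv [{h_1}^{h_3},({h_2}^{h_3})^\psi] \pmod{R(H)}$; thus each displayed generator lies in $R(H)$, and since $R(H) \lhd \chi(H)$ is normal, its $T$-conjugates do too, giving $N \leq R(H)$ immediately. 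The substance of the argument is the reverse inclusion $R(H) \leq N$.

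For the reverse inclusion, the first step is to pass to the quotient $\chi(H)/N$ and show that the relation $[h_1,h_2^\psi]^{h_3} = [{h_1}^{h_3},({h_2}^{h_3})^\psi]$ holds there for \emph{all} $h_1,h_2,h_3 \in H$, not merely modulo a transversal. By construction this relation holds in $\chi(H)/N$ whenever $h_3$ ranges over the chosen transversal $T$ of $H'$ in $H$; I would then verify that the set of $h_3 \in H$ for which it holds is closed under multiplication by $H'$, using commutator expansions together with Lemma~\ref{lema:R}(iii) (which equates conjugation by $h_3$ and by $h_3^\psi$ on elements of $D(H)$) and (iv) (that $D(H)$ centralizes $L(H)$). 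The key leverage is that the generators of $N$ are exactly the obstructions to the identity holding, so conjugating by $H'$-elements reduces to conjugating by transversal representatives up to elements already trivial in the quotient. Once the relation holds for all $h_3$, the group $\chi(H)/N$ satisfies all defining relations of $\nu(H)/\Delta(H)$-type, and in particular $[H,L(H),H^\psi]$ maps into $1$, which is exactly the statement $R(H) \leq N$.

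The main obstacle I anticipate is the $H'$-closure step: showing that if the identity $[h_1,h_2^\psi]^{h_3} = [{h_1}^{h_3},({h_2}^{h_3})^\psi]$ holds modulo $N$ for a given $h_3$, then it continues to hold for $h_3 c$ with $c \in H'$. This requires expanding $[{h_1}^{h_3 c},({h_2}^{h_3 c})^\psi]$ and carefully tracking how conjugation by the commutator $c$ interacts with the bracket, repeatedly invoking parts (iii)--(v) of Lemma~\ref{lema:R} to migrate $\psi$-conjugations to ordinary conjugations inside $D(H)$ and to collect error terms into the normal subgroup $N$. I expect this to be a finite but delicate commutator calculation: one reduces to the case $c = [a,b]$ a single commutator (since $H'$ is generated by such), and then the bilinearity-type relations listed in Proposition~\ref{prop:L/L'fg}'s proof, combined with the fact that $R(H) \supseteq [W(H),H]$ from Lemma~\ref{lema:R}(ii), control the remaining terms. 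The normality built into the $T$-superscript in the statement is precisely what absorbs these correction terms, so the transversal formulation is what makes the identity exact rather than merely approximate.
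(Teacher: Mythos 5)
Your two-inclusion skeleton and the use of Lemma~\ref{lema:R}(v) for the easy containment $N \leq R(H)$ match the paper, but the reverse inclusion as you have set it up contains a genuine gap: you pass to ``the quotient $\chi(H)/N$'' while $N$ is \emph{not} known to be a normal subgroup of $\chi(H)$. The superscript $T$ in the statement is not a normal closure (your parenthetical gloss is incorrect): $N$ is merely the subgroup generated by the $T$-conjugates of the elements $[h_1,h_2^\psi]^{h_3}[{h_1}^{h_3},({h_2}^{h_3})^\psi]^{-1}$, and proving that this subgroup is normal --- equivalently, that conjugation by $H^\prime$ and by $H^\psi$ produces nothing new --- is precisely the content of the transversal refinement. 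Relatedly, you misread where $T$ enters: in both the statement and your own definition of $N$, the parameters $h_1,h_2,h_3$ already range over \emph{all} of $H$; the transversal only restricts the conjugating set. So your announced ``main obstacle'' (extending the relation from $h_3\in T$ to $h_3c$ with $c\in H^\prime$) attacks something that holds by definition of the generating set, while the step your plan actually needs --- normality of $N$ --- is never addressed. A further gap: even granting the quotient, from ``$\chi(H)/N$ satisfies the defining relations of $\nu(H)/\Delta(H)$'' you can only conclude that $\chi(H)/N$ is a homomorphic image of $\nu(H)/\Delta(H)$; to deduce that $[H,L(H),H^\psi]$ maps to $1$ you must know it is already trivial in $\nu(H)/\Delta(H)$ itself, i.e.\ Rocco's theorem that the kernel of the natural map $\chi(H)\rightarrow \nu(H)/\Delta(H)$ is exactly $R(H)$, which you never invoke.

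The paper repairs exactly these points by reordering the argument. It first works with the honest normal closure $J(H)={\left\langle [h_{1},h_{2}^{\psi }]^{h_{3}}[{h_{1}}^{h_{3}},({h_{2}}^{h_{3}})^{\psi }]^{-1}\mid h_i\in H\right\rangle }^{\chi (H)}$, for which forming the quotient is legitimate; there your strategy is sound and is implemented by producing mutually inverse maps between $\chi(H)/J(H)$ and $\nu(H)/\Delta(H)$, using Rocco's presentation of $\nu(H)/\Delta(H)$ together with his identification of the kernel with $R(H)$; this yields $R(H)=J(H)$. Only afterwards is the conjugating set shrunk: the generators lie in $D(H)$, so Lemma~\ref{lema:R}(iii) replaces $\chi(H)$-conjugation by $H$-conjugation, and since $R(H)\leq W(H)\leq L(H)\cap D(H)$ with $[D(H),L(H)]=1$, parts (iii) and (iv) give $[R(H),H^{\prime \psi }]=1$, so that writing $h=tc$ with $t\in T$, $c\in H^\prime$ reduces $H$-conjugation to $T$-conjugation. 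Your sketch contains these ingredients (Lemma~\ref{lema:R}(iii), (iv), commutation with $H^\prime$), but they are deployed inside an ill-defined quotient; reorganized in the paper's order --- full normal closure and Rocco's theorem first, transversal reduction last --- the argument goes through.
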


\begin{proof}
First let's show that 
\begin{equation*}
R(H)={\left\langle [h_{1},h_{2}^{\psi }]^{h_{3}}[{h_{1}}^{h_{3}},({h_{2}}%
^{h_{3}})^{\psi }]^{-1}\mid h_{1},\;h_{2},\;h_{3}\;\in H\right\rangle }%
^{\chi (H)}.
\end{equation*}%
The subgroup of $\chi (H)$,   
\begin{equation*}
J(H)={\left\langle [h_{1},h_{2}^{\psi }]^{h_{3}}[{h_{1}}^{h_{3}},({h_{2}}%
^{h_{3}})^{\psi }]^{-1}\mid h_{1},\;h_{2},\;h_{3}\;\in H\right\rangle }%
^{\chi (H)}
\end{equation*}%
is contained in $R(H)$, by \hyperref[lema:R]{ Lemma \ref*{lema:R} (v)}. 

By Rocco \cite[p. 68-69]{rocco-crnts} the epimorphism $\varepsilon :\chi
(H)\rightarrow \frac{\nu (H)}{\Delta (H)}$ given by $h\mapsto \Delta (H)h$, $%
h^{\psi }\mapsto \Delta (H)h^{\psi },\;\forall h\in H$, has kernel $R(H)$.
So $\varepsilon $ induces $\bar{\varepsilon}:\frac{\chi (H)}{J(H)}%
\rightarrow \frac{\nu (H)}{\Delta (H)}$. On the other hand, the application $%
\varsigma :\frac{\nu (H)}{\Delta (H)}\rightarrow \frac{\chi (H)}{J(H)}$ such
that $\Delta (H)h\mapsto J(H)h$, $\Delta (H)h^{\psi }\mapsto J(H)h^{\psi }$
extend to a epimorphism $\bar{\varsigma}:\frac{\nu (H)}{\Delta (H)}%
\rightarrow \frac{\chi (H)}{J(H)}$, because \newpage
$$
\frac{\nu (H)}{\Delta (H)}=\left< H,H^{\psi }\mid \lbrack h,h^{\psi
}]=1\;,\;[h_{1},h_{2}^{\psi }]^{h_{3}}=[{h_{1}}^{h_{3}},({h_{2}}%
^{h_{3}})^{\psi }]=[h_{1},h_{2}^{\psi }]^{h_{3}^{\psi }},\right.
$$
$
\hspace{3,2cm}\left. \forall h,\,h_{1}\,h_{2},\,h_{3}\,\in H\right>
$\\
and the relations 
\begin{equation*}
\lbrack h_{1},h_{2}^{\psi }]^{h_{3}^{\psi }}=[h_{1},h_{2}^{\psi }]^{h_{3}}=[{%
h_{1}}^{h_{3}},(h_{2}^{h_{3}})^{\psi }],\;\;[h,h^{\psi }]=1,\;\forall
h,\;h_{1},\;h_{2},\;h_{3}\in H
\end{equation*}%
holds in $\frac{\chi (H)}{J(H)}$. How $\bar{\varsigma}\bar{\varepsilon}$ is
the identity in $\frac{\chi (H)}{J(H)}$, follows that $R(H)=J(H)$.

Now we have that $R(H)\leq D(H)$, so by \hyperref[lema:R]{ Lemma \ref*{lema:R} (iii)} , it follows that

\begin{equation*}
R(H)={\left\langle [h_{1},h_{2}^{\psi }]^{h_{3}}[{h_{1}}^{h_{3}},({h_{2}}%
^{h_{3}})^{\psi }]^{-1}\mid h_{1},\;h_{2},\;h_{3}\;\in H\right\rangle }^{H}.
\end{equation*}

Again, by \hyperref[lema:R]{ Lemma \ref*{lema:R} (iii) and (iv)}, we have 
\begin{equation*}
\lbrack R(H),H^{\prime \psi }]=1.
\end{equation*}%
Then 
\begin{equation*}
R(H)={\left\langle [h_{1},h_{2}^{\psi }]^{h_{3}}[{h_{1}}^{h_{3}},({h_{2}}%
^{h_{3}})^{\psi }]^{-1}\mid h_{1},\;h_{2},\;h_{3}\;\in H\right\rangle }^{T}.
\end{equation*}
\end{proof}

\begin{proposition}
\label{prop:R} Let $H$ be a polycyclic group with a polycyclic generators $%
S=\{a_{1},a_{2},\ldots a_{n}\}$ and let $T$ be a transversal for $H^{\prime }
$ in $H$. Then 
\begin{equation*}
R(H)={\left\langle [a_{i},a_{j}^{\psi }]^{a_{k}}[{a_{i}}^{a_{k}},({a_{j}}%
^{a_{k}})^{\psi }]^{-1}\mid a_{i},\;a_{j},\;a_{k}\,\in S\right\rangle }^{T}.
\end{equation*}
\end{proposition}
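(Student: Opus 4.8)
The plan is to prove both inclusions between the two subgroups; the nontrivial one is that every generator of $R(H)$ in the form supplied by Proposition~\ref{prop:gerR} already lies in the subgroup defined by the right-hand side of the assertion. For brevity write $w(h_{1},h_{2},h_{3})=[h_{1},h_{2}^{\psi}]^{h_{3}}[{h_{1}}^{h_{3}},({h_{2}}^{h_{3}})^{\psi}]^{-1}$, so that $R_{S}:=\langle w(a_{i},a_{j},a_{k})\mid a_{i},a_{j},a_{k}\in S\rangle^{T}$ is the claimed subgroup and, by Proposition~\ref{prop:gerR}, $R(H)=\langle w(h_{1},h_{2},h_{3})\mid h_{1},h_{2},h_{3}\in H\rangle^{T}$. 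The inclusion $R_{S}\le R(H)$ is immediate. As in the proof of Proposition~\ref{prop:gerR}, Lemma~\ref{lema:R}(iii) and (iv) show that $H'$ centralizes $R(H)\supseteq\{w(a_{i},a_{j},a_{k})\}$ and that conjugation by $H^{\psi}$ agrees with conjugation by $H$ on $D(H)\supseteq R_{S}$; hence $R_{S}$ is in fact the $H$-normal closure of its generators and is normal in $\chi(H)$. It therefore suffices to prove $w(h_{1},h_{2},h_{3})\in R_{S}$ for all $h_{1},h_{2},h_{3}\in H$, that is, that the relation of Lemma~\ref{lema:R}(v) holds in $\chi(H)/R_{S}$ for \emph{all} elements, granted only for generators.

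First I would record, in $\chi(H)/R_{S}$, the two commutator expansions $[h_{1}h_{1}',h_{2}^{\psi}]=[h_{1},h_{2}^{\psi}]^{h_{1}'}[h_{1}',h_{2}^{\psi}]$ and $[h_{1},(h_{2}h_{2}')^{\psi}]=[h_{1},(h_{2}')^{\psi}]\,[h_{1},h_{2}^{\psi}]^{h_{2}'}$, where in the second identity Lemma~\ref{lema:R}(iii) has been used to replace conjugation by $(h_{2}')^{\psi}$ by conjugation by $h_{2}'$ (legitimate, since the commutators lie in $D(H)$). Together with the elementary rewrites of $[a_{i}^{-1},h_{2}^{\psi}]$ and $[h_{1},(a_{j}^{-1})^{\psi}]$ as suitably conjugated inverses, these reduce the base cases of the inductions below to the defining generators $w(a_{i},a_{j},a_{k})$.

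The main reduction is that it suffices to establish the single-generator-conjugator case
\begin{equation*}
[h_{1},h_{2}^{\psi}]^{a_{k}}=[{h_{1}}^{a_{k}},({h_{2}}^{a_{k}})^{\psi}]\quad\text{in }\chi(H)/R_{S},\qquad a_{k}\in S,\ h_{1},h_{2}\in H.\tag{$*$}
\end{equation*}
Indeed, writing a general conjugator as $h_{3}=a_{k}v$ with $v$ shorter in $S^{\pm1}$, one passes from $[h_{1},h_{2}^{\psi}]^{a_{k}}$ to $[{h_{1}}^{a_{k}},({h_{2}}^{a_{k}})^{\psi}]$ by $(*)$ and then applies the inductive hypothesis on the conjugator length to the shifted arguments ${h_{1}}^{a_{k}},{h_{2}}^{a_{k}}$; because $(*)$ is asserted for all $h_{1},h_{2}$, the change of arguments is harmless and the induction on the length of $h_{3}$ closes (the case of an $a_{k}^{-1}$ conjugator reducing to $(*)$ by substituting ${h_{1}}^{a_{k}^{-1}},{h_{2}}^{a_{k}^{-1}}$ for $h_{1},h_{2}$). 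I would prove $(*)$ by induction on the $S$-length of $h_{1}$, and then of $h_{2}$: expanding $h_{1}=a_{i}h_{1}'$ by the first commutator identity splits the left side into a term $[h_{1}',h_{2}^{\psi}]^{a_{k}}$ handled by the inductive hypothesis for $h_{1}'$ and a term $[a_{i},h_{2}^{\psi}]^{h_{1}'a_{k}}$; matching against the corresponding expansion of the right side of $(*)$ reduces this to $(*)$ with the single generator $a_{i}$ in place of $h_{1}$. The symmetric expansion in the second argument reduces the case $h_{1}=a_{i}$ to smaller $h_{2}$, and the base case $h_{1}=a_{i}$, $h_{2}=a_{j}$ is, after the inverse rewrites of the previous paragraph, exactly membership of $w(a_{i},a_{j},a_{k})$ in $R_{S}$.

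The step I expect to be the main obstacle is precisely the appearance, in the inductive step of $(*)$, of the compound conjugator $h_{1}'a_{k}$ (respectively $h_{2}'a_{k}$): at that moment the expression has left the single-generator regime, so no length visibly decreases. The resolution is that, after writing out the matching expansion of $[{h_{1}}^{a_{k}},({h_{2}}^{a_{k}})^{\psi}]$ and using that conjugation is an automorphism, the outer conjugations by $a_{k}$ and by $h_{1}'$ cancel, collapsing the troublesome term to the single-generator instance $[a_{i},h_{2}^{\psi}]^{a_{k}}=[{a_{i}}^{a_{k}},({h_{2}}^{a_{k}})^{\psi}]$, covered by the induction. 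Making this cancellation precise, and verifying throughout that each replacement of a $\psi$-conjugation by an ordinary one is licensed by Lemma~\ref{lema:R}(iii) (so that every error term genuinely stays inside $R_{S}$), is the only delicate point; the remainder is routine commutator calculus.
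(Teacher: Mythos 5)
Your proof is correct, but it takes a genuinely different route from the paper's. The paper disposes of the combinatorial heart of the statement by citation: by a result in McDermott's thesis, the normal closure in the free product $H\ast H^{\psi }$ of all the words $w(h_{1},h_{2},h_{3})=[h_{1},h_{2}^{\psi }]^{h_{3}}[{h_{1}}^{h_{3}},({h_{2}}^{h_{3}})^{\psi }]^{-1}$ already coincides with the normal closure of the finitely many words $w(a_{i},a_{j},a_{k})$ on generators; applying the natural epimorphism $\phi \colon H\ast H^{\psi }\rightarrow \chi (H)$ and the first part of Proposition~\ref{prop:gerR} then gives $R(H)=\langle w(a_{i},a_{j},a_{k})\rangle ^{\chi (H)}$, after which the passage from the $\chi (H)$-closure to the $T$-closure is exactly the Lemma~\ref{lema:R}(iii)/(iv) argument you also invoke. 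What you do instead is reprove the reduction-to-generators statement directly inside $\chi (H)$: you first establish that $R_{S}$ is normal (using $[R(H),H^{\prime }]=1$ and the agreement of conjugation by $H^{\psi }$ with conjugation by $H$ on $D(H)$), and then run a double induction with the standard commutator expansions. Your key observation is the right one and is where naive attempts fail: the single-generator-conjugator statement $(\ast )$ must be proved for \emph{all} $h_{1},h_{2}$ simultaneously, so that the compound conjugator $h_{1}^{\prime }a_{k}$ arising in the inductive step is absorbed by conjugating the already-established congruence $[a_{i},h_{2}^{\psi }]^{a_{k}}\equiv [{a_{i}}^{a_{k}},({h_{2}}^{a_{k}})^{\psi }]$ by $a_{k}^{-1}h_{1}^{\prime }a_{k}$, since $a_{k}\cdot a_{k}^{-1}h_{1}^{\prime }a_{k}=h_{1}^{\prime }a_{k}$ and $R_{S}\lhd \chi (H)$ makes congruences stable under conjugation. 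In effect you have supplied a self-contained proof of the lemma the paper outsources to McDermott (carried out in the quotient rather than in the free product); this is the standard ``relations need only be imposed on generators'' argument from the nonabelian tensor square literature. The trade-off: the paper's proof is shorter but rests on an unpublished thesis, while yours is longer but self-contained and makes explicit exactly where Lemma~\ref{lema:R}(iii) and (iv) are needed to keep every error term inside $R_{S}$.
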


\begin{proof}
Let $S=\{a_{1},\ldots ,a_{n}\}$ be a generating set for $H$ then by \cite[p. 37]{mcdermott}, the subgroup 
\begin{equation*}
K={\left\langle [h_{1},h_{2}^{\psi }]^{h_{3}}[{h_{1}}^{h_{3}},({h_{2}}%
^{h_{3}})^{\psi }]^{-1}\mid h_{1},\;h_{2},\;h_{3}\,\in H\right\rangle }%
^{H\ast H^{\psi }}
\end{equation*}%
of the free product $H\ast H^{\psi }$ has the presentation 
\begin{equation*}
J={\left\langle [a_{i},a_{j}^{\psi }]^{a_{k}}[{a_{i}}^{a_{k}},({a_{j}}%
^{a_{k}})^{\psi }]^{-1}\mid a_{i},\;a_{j},\;a_{k}\,\in S\right\rangle }%
^{H\ast H^{\psi }}.
\end{equation*}%
If $\phi :H\ast H^{\psi }\mapsto \chi (H)$ \'{ }is the natural epimorphism,
we conclude that

\begin{eqnarray*}
\phi (J)=R(H) &=&{\left\langle [a_{i},a_{j}^{\psi }]^{a_{k}}[{a_{i}}%
^{a_{k}},({a_{j}}^{a_{k}})^{\psi }]^{-1}\mid a_{i},\;a_{j},\;a_{k}\,\in
S\right\rangle }^{\chi (H)} \\
&\overset{\hyperref[prop:gerR]{Prop\ref*{prop:gerR}}}{=}&{\left\langle
[a_{i},a_{j}^{\psi }]^{a_{k}}[{a_{i}}^{a_{k}},({a_{j}}^{a_{k}})^{\psi
}]^{-1}\mid a_{i},\;a_{j},\;a_{k}\,\in S\right\rangle }^{T}.
\end{eqnarray*}
\end{proof}

\subsection{Different behaviours of $R\left( H\right) $}

It is difficult in general to obtain information about $R\left( H\right) $
and there are a few cases for which it is described. The following remark is
helpful in establishing the non-triviality of $R\left( H\right) $.

\begin{remark}
Let $H,K$ be groups and $\varphi :H\rightarrow K$ be an epimorphism. Then $%
\varphi $ extends to an epimorphism $\widehat{\varphi }:\chi (H)\rightarrow
\chi (K)$ by $\widehat{\varphi }:h\rightarrow h^{\varphi },h^{\psi
}\rightarrow \left( h^{\varphi }\right) ^{^{\psi }}$ (that is, by having $%
\widehat{\varphi }$ commute with $\psi $). Therefore, $L\left( H\right) ^{%
\widehat{\varphi }}=L\left( K\right) $ and $R\left( H\right) ^{\widehat{%
\varphi }}=R\left( K\right) $. In particular, $R\left( H\right) $ is
non-trivial provided $R\left( K\right) $ is non-trivial.
\end{remark}

Information about $R\left( H\right) $ is known for finitely generated
abelian groups $H$ (see Section 4.2 of \cite{sidki-wp}).

\begin{theorem}
(Theorem 4.2.1 \cite[p. 204]{sidki-wp}) Let $H$ be an abelian group. Then,
\begin{enumerate}
\item[(i)] $D=W=\left[ L,H\right] ,$ $R=\left[ D,H\right] =\left[ L,2H\right] $;
\item[(ii)] $L$ is nilpotent of class $\leq 2$, $L^{\prime }\leq D\leq Z\left(
L\right) ,$ $L^{\prime }\leq Z\left( \chi (H)\right) $; 
\item[(iii)] $L^{\prime
}=D^{2}=\left[ H^{2},H^{\psi }\right] ,$ $R^{2}=1$.
\end{enumerate}

\end{theorem}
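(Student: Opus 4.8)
The plan is to separate the ``soft'' inclusions, which follow formally from $H$ being abelian, from the exact identities of part (iii), which need an explicit commutator calculus. First I would record the structural consequences of commutativity. Since $H$ is abelian the projection $\chi(H)\to H$, $h,h^{\psi}\mapsto h$, factors as $\chi(H)\to H\times H$ followed by the multiplication map $H\times H\to H$ (a homomorphism precisely because $H$ is abelian); as the first map has kernel $D$ and the composite has kernel $L$, this gives $D\le L$, whence $W=L\cap D=D$. The same commutativity makes $\chi(H)/D\cong H\times H$ abelian, so $[L,H]\le \chi(H)'\le D$ and $L'\le D$. Combining $D\le L$ with Lemma~\ref{lema:R}(iv) (that $D$ centralises $L$) yields $D\le Z(L)$; in particular $D$ is abelian and, from $L'\le D\le Z(L)$, the group $L$ is nilpotent of class $\le 2$. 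This disposes of (i) except for the identifications of $D$ and $R$, and of (ii) except for $L'\le Z(\chi(H))$.

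Next I would set up the pairing $d(a,b):=[a,b^{\psi}]$ and the elements $\ell_{h}:=h^{-1}h^{\psi}\in L$. Expanding $[ab,(ab)^{\psi}]=1$ with the commutator identities and Lemma~\ref{lema:R}(iii) gives the exact relation $d(a,b)\,d(b,a)=1$, so $d$ is alternating, while Lemma~\ref{lema:R}(v) shows it is bilinear modulo $R$. Writing $h_{2}^{\psi}=h_{2}\ell_{h_{2}}$ and using $[h_{1},h_{2}]=1$ gives $[h_{1},h_{2}^{\psi}]=[h_{1},\ell_{h_{2}}]\in[L,H]$, so $D=[H,H^{\psi}]\le[L,H]$; with the reverse inclusion from the first paragraph this proves $D=[L,H]$. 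For the image of $R$, note $R=[H,L,H^{\psi}]=[D,H^{\psi}]$ since $[H,L]=D$; because $D$ centralises $L$ and $[D,H]\le[W,H]\le R\le D$, writing $h^{\psi}=h\ell_{h}$ collapses $[d,h^{\psi}]$ to $[d,h]$ on generators $d\in D$, giving $[D,H^{\psi}]=[D,H]$ and hence $R=[D,H]=[[L,H],H]=[L,2H]$.

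The core computation is part (iii). A direct expansion yields the two identities $[\ell_{a},b]=d(a,b)$ and $[\ell_{a},b^{\psi}]=d(a,b)^{-1}$ (using that $D$ centralises $L$ to erase the conjugations), and from these $[\ell_{a},\ell_{b}]=[\ell_{a},b^{\psi}]\,[\ell_{a},b^{-1}]^{b^{\psi}}=d(a,b)^{-2}$. As $L$ is generated by the $\ell_{h}$ and $D$ is abelian and centralised by $L$, the normal closure of these commutators is exactly $\langle d(a,b)^{2}\rangle=D^{2}$, giving $L'=D^{2}$. For $R^{2}=1$ I would establish the rotation identity $[d(a,b),c]=[a,d(b,c)]$, obtained by expanding $d(a,b)^{c}$ through $(b^{\psi})^{c}=b^{\psi}d(b,c)$ and comparing with $d(a,b)^{c}=d(a,b)[d(a,b),c]$, all error terms lying in the abelian group $D$. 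Setting $r(a,b,c)=[d(a,b),c]$, this reads $r(a,b,c)=r(b,c,a)^{-1}$; applying it three times forces $r(a,b,c)=r(a,b,c)^{-1}$, so every generator of $R=[D,H]$ is an involution and $R^{2}=1$.

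It then remains to identify $D^{2}=[H^{2},H^{\psi}]$ and to deduce $L'\le Z(\chi(H))$. Expanding $[h^{2},g^{\psi}]=d(h,g)^{h}d(h,g)=d(h,g)^{2}\,[d(h,g),h]$ shows $[H^{2},H^{\psi}]$ and $D^{2}$ agree modulo the $R$-valued term $[d(h,g),h]$, and the exact equality is forced by pinning this term down inside $D^{2}$ using $R^{2}=1$ and the rotation identity. Finally, for $d\in D$ one has $[d^{2},h]=[d,h]^{2}$ (as $D$ is abelian and $[d,h]\in R\le D$), and $[d,h]\in R$ with $R^{2}=1$ gives $[d^{2},h]=1$; since $L'=D^{2}$ is generated by such $d^{2}$ this yields $[L',H]=1$, and then $[L',H^{\psi}]=1$ follows from $h^{\psi}=h\ell_{h}$ together with $L'\le D\le Z(L)$, so $L'\le Z(\chi(H))$. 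The main obstacle is exactly this last group of identities in (iii): the pairing $d$ is only bilinear modulo $R$, so every cancellation must be tracked on the nose, and the equality $[H^{2},H^{\psi}]=D^{2}$ in particular depends on controlling precisely where the $R$-valued error terms sit rather than merely working modulo $R$.
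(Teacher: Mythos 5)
First, a point of reference: the paper does not prove this theorem at all --- it is quoted verbatim from Sidki (Theorem 4.2.1 of \cite{sidki-wp}), so there is no internal proof to compare against, and your argument must stand on its own. On that basis, nearly all of it checks out, and it is essentially the expected commutator-calculus reconstruction. The soft reductions in your first paragraph ($D\le L$ by factoring the projection $\chi(H)\to H$ through $H\times H$, hence $W=D$, $L'\le D\le Z(L)$ and class $\le 2$), the identifications $D=[L,H]$ and $R=[D,H]=[L,2H]$, the exact antisymmetry $d(a,b)d(b,a)=1$, the identities $[\ell_a,b]=d(a,b)$, $[\ell_a,b^\psi]=d(a,b)^{-1}$, $[\ell_a,\ell_b]=d(a,b)^{-2}$ (which give $L'=D^2$ since $L$ has class $\le 2$ and $D$ is abelian), the rotation identity $r(a,b,c)=r(b,c,a)^{-1}$ with its consequence $R^2=1$, and the final deduction $[L',H]=[L',H^\psi]=1$ are all correct, with the conjugation-erasing steps legitimately justified by Lemma~\ref{lema:R}(iii)--(iv).

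The one genuine soft spot is the equality $D^2=[H^2,H^\psi]$, which you only gesture at (``pinning this term down inside $D^2$''). You correctly reduce it to locating $r(h,g,h)=[d(h,g),h]$ in the exact identity $[h^2,g^\psi]=d(h,g)^2\,r(h,g,h)$, but you never actually locate it, and the rotation identity applied naively to the triple $(h,g,h)$ only returns a tautology. The missing step is short and uses exactly the tools you already have: once $R^2=1$ is known, the rotation identity says $r$ is invariant under the $3$-cycle $(a,b,c)\mapsto(b,c,a)$, while exact antisymmetry gives, inside the abelian group $D$, $r(a,b,c)=[d(b,a)^{-1},c]=[d(b,a),c]^{-1}=r(b,a,c)^{-1}=r(b,a,c)$, i.e.\ invariance under the transposition of the first two arguments. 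These two permutations generate all of $S_3$, so $r$ is symmetric in its three arguments, and therefore $r(h,g,h)=r(h,h,g)=[d(h,h),g]=[[h,h^\psi],g]=[1,g]=1$. Thus the error term does not merely lie in $D^2$ --- it vanishes identically, $[h^2,g^\psi]=d(h,g)^2$ holds on the nose, and $[H^2,H^\psi]$ and $D^2=\left\langle d(h,g)^2\mid h,g\in H\right\rangle$ have literally the same generating set. With that one insertion your proof is complete.
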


In case $H$ is a finite elementary abelian $2$-group, we have

\begin{proposition}
Let $H$ be an elementary abelian $2$-groups of rank $k$ and order $n$ ($%
=2^{k}$). Then, 
\begin{enumerate}
\item[(i)] $\chi (H)$ is isomorphic to the natural extension of ${%
\mathcal{A}}_{\mathbb{Z}}(H)$ by $H$ of order $2^{n-1}n$, where ${\mathcal{A}%
}_{\mathbb{Z}}(H)$ corresponds to $L\left( H\right) $; 
\item[(ii)] the derived
subgroup $\gamma _{2}\left( \chi (H)\right) =D\left( H\right) $; 
\item[(iii)] $%
\gamma _{3}\left( \chi (H)\right) =R\left( H\right) $ and has order $%
2^{n-1-k-\binom{k}{2}}$.
\end{enumerate}

\begin{proof}
The first two items follow directly from the material in Section 4.2 of \cite%
{sidki-wp}. The third item follows from 
\begin{eqnarray*}
\chi (H)^{\prime } &=&[L\left( H\right) ,H], \\
\lbrack L\left( H\right) ,H,\chi (H)] &=&[L\left( H\right) ,H,H]=[L\left(
H\right) ,H,H^{\psi }]=R\left( H\right)
\end{eqnarray*}%
and from 
\begin{equation*}
\left\vert \frac{\chi (H)}{R\left( H\right) }\right\vert =\left\vert
H\right\vert ^{2}\left\vert M\left( H\right) \right\vert ,\left\vert M\left(
H\right) \right\vert =2^{\binom{k}{2}}\text{.}
\end{equation*}
\end{proof}
\end{proposition}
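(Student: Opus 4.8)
The plan is to treat the three items in turn, obtaining (i) and (ii) from the abelian structure theory recalled above and from Proposition~\ref{prop:L/L'fg}, and then deducing (iii) from a short commutator computation together with an order count. For (i) the starting observation is that $H$ has exponent $2$, so $H^{2}=1$ and Theorem 4.2.1(iii) of \cite{sidki-wp} gives $L'=[H^{2},H^{\psi}]=1$; thus $L=L(H)$ is abelian. I would then invoke Proposition~\ref{prop:L/L'fg}: the epimorphism $\epsilon:\chi(H)\to G/N$ has kernel $L'=1$, so $\epsilon$ is an isomorphism onto $G/N=\big({\mathcal{A}}_{\mathbb{Z}}(H)/I_{2}(H)\big)\rtimes H$, the natural extension of ${\mathcal{A}}_{\mathbb{Z}}(H)/I_{2}(H)$ by $H$, with $L$ mapping onto ${\mathcal{A}}_{\mathbb{Z}}(H)/I_{2}(H)$. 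The one computation needed is $I_{2}(H)=2{\mathcal{A}}_{\mathbb{Z}}(H)$, which follows from $(h-1)^{2}=h^{2}-2h+1=-2(h-1)$ for every $h\in H$. Since ${\mathcal{A}}_{\mathbb{Z}}(H)$ is free abelian of rank $n-1$, the quotient ${\mathcal{A}}_{\mathbb{Z}}(H)/2{\mathcal{A}}_{\mathbb{Z}}(H)$ is elementary abelian of order $2^{n-1}$, whence $|L|=2^{n-1}$ and $|\chi(H)|=2^{n-1}n$.

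Item (ii) is then immediate: $\chi(H)/D\cong H\times H$ is abelian, so $\gamma_{2}(\chi(H))=\chi(H)'\leq D$, while Theorem 4.2.1(i) gives $D=[L,H]\leq\chi(H)'$, and hence $\gamma_{2}(\chi(H))=D$. For (iii) I would first identify $\gamma_{3}$. Since $\gamma_{3}(\chi(H))=[\gamma_{2}(\chi(H)),\chi(H)]=[D,\chi(H)]$ and $\chi(H)=LH$ with $D\leq Z(L)$ by Theorem 4.2.1(ii), for $d\in D$, $l\in L$, $h\in H$ one has $[d,lh]=[d,h]$ because $[D,L]=1$; therefore $[D,\chi(H)]=[D,H]=R$ by Theorem 4.2.1(i), so that $\gamma_{3}(\chi(H))=R$.

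The order of $R$ I would obtain from $|R|=|\chi(H)|/|\chi(H)/R|$. Here $\chi(H)/W\cong T(H)=\{(a,ab,b):a,b\in H\}\cong H\times H$ has order $|H|^{2}$, and $W/R\cong M(H)$, so that $|\chi(H)/R|=|H|^{2}|M(H)|$. Using the classical fact that $M(H)\cong\wedge^{2}H$ has order $2^{\binom{k}{2}}$ for an elementary abelian $2$-group of rank $k$, and $|\chi(H)|=2^{n-1+k}$ from (i), I would conclude $|R|=2^{n-1+k}/\big(2^{2k}2^{\binom{k}{2}}\big)=2^{n-1-k-\binom{k}{2}}$.

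The step I expect to be the main obstacle is not the commutator manipulation but the two inputs feeding $|\chi(H)/R|=|H|^{2}|M(H)|$: the identification $W/R\cong M(H)$ and the value $|M(H)|=2^{\binom{k}{2}}$ of the Schur multiplier of an elementary abelian $2$-group. Once these are in hand the arithmetic is forced. A secondary point to verify carefully is that $\epsilon$ is genuinely an isomorphism in (i) (equivalently $L'=1$), since the whole order computation rests on $|\chi(H)|=2^{n-1}n$.
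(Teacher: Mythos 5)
Your proof is correct and follows essentially the same route as the paper: items (i) and (ii) come from the structure theory of $\chi(H)$ for abelian $H$ (Sidki's Theorem 4.2.1 together with the augmentation-ideal description, here via Proposition~\ref{prop:L/L'fg} and the observation $I_{2}(H)=2{\mathcal{A}}_{\mathbb{Z}}(H)$), and item (iii) comes from the same commutator identification $\gamma_{3}(\chi(H))=[D,\chi(H)]=[D,H]=R(H)$ plus the identical order count $|\chi(H)/R|=|H|^{2}|M(H)|$ with $|M(H)|=2^{\binom{k}{2}}$. The only difference is that you spell out the details that the paper delegates to Section 4.2 of Sidki's article, which in fact clarifies the statement of (i) (the group acting is really ${\mathcal{A}}_{\mathbb{Z}}(H)/2{\mathcal{A}}_{\mathbb{Z}}(H)$, of order $2^{n-1}$).
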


In view of the above remark, we conclude

\begin{proposition}
Let $H$ be a group which has as homomorphic image an elementary abelian $2$%
-group of rank at least $3$. Then $R\left( H\right) $ is non-trivial.
\end{proposition}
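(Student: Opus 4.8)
The plan is to reduce the claim to the already-computed case of elementary abelian $2$-groups via the preceding remark, and then to read off non-triviality from the explicit order formula. By hypothesis there is an epimorphism $\varphi:H\rightarrow K$ onto an elementary abelian $2$-group $K$ of rank $k\geq 3$. The remark immediately above provides an extension to an epimorphism $\widehat{\varphi}:\chi(H)\rightarrow\chi(K)$ satisfying $R(H)^{\widehat{\varphi}}=R(K)$. Since the image of $R(H)$ under $\widehat{\varphi}$ is $R(K)$, it suffices to prove that $R(K)$ is non-trivial: if $R(K)\neq 1$ then $R(H)$ cannot be trivial either.

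For this I would invoke the order formula from the proposition on elementary abelian $2$-groups. Writing $n=\left|K\right|=2^{k}$, that result gives
\[
\left|R(K)\right|=2^{\,n-1-k-\binom{k}{2}}.
\]
Hence $R(K)\neq 1$ precisely when the exponent $2^{k}-1-k-\binom{k}{2}$ is strictly positive, so the entire claim collapses to this numerical inequality.

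The remaining step is the elementary estimate $2^{k}>1+k+\binom{k}{2}$ for all $k\geq 3$. At $k=3$ the two sides equal $8$ and $7$, so the exponent is $1$ and $\left|R(K)\right|=2$; note that at $k=2$ the exponent is $0$, which explains why rank at least $3$ is the exact threshold in the statement. The inductive step is immediate: passing from $k$ to $k+1$ increases the left-hand side by $2^{k}$ and the right-hand side only by $k+1$, and $2^{k}>k+1$ for every $k\geq 3$, so the inequality propagates. Consequently the exponent stays positive, $R(K)$ is non-trivial, and therefore $R(H)$ is non-trivial as well. There is no substantive obstacle here: all the weight is carried by the remark and by the order computation for elementary abelian $2$-groups, and the only thing one must verify is the exponential-versus-quadratic inequality that makes the exponent positive from rank $3$ onward.
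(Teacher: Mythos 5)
Your proposal is correct and follows exactly the route the paper intends: the paper's entire ``proof'' is the phrase ``In view of the above remark, we conclude,'' i.e.\ the reduction via $\widehat{\varphi}$ with $R(H)^{\widehat{\varphi}}=R(K)$ combined with the order formula $\left\vert R(K)\right\vert =2^{\,2^{k}-1-k-\binom{k}{2}}$ from the preceding proposition. You have simply made explicit the details the paper leaves implicit, including the verification that the exponent $2^{k}-1-k-\binom{k}{2}$ is positive exactly when $k\geq 3$.
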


On the other hand, $R(H)$ can be trivial, as is the case of $H$ a perfect
group (see Section 4.4 of \cite{sidki-wp}). For polycyclic groups, we have

\begin{proposition}
The group $R\left( H\right) $ is trivial for the following polycyclic
groups: 
\begin{enumerate}
\item[(i)] $H$ a finite abelian $p$-group for $p$ odd; 
\item[(ii)] $H$ a $2$%
-generated free nilpotent group of class $2$; 
\item[(iii)] $H$ a metacyclic group.
\end{enumerate}

\end{proposition}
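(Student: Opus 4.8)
The plan is to prove each of the three cases by exhibiting the triviality of $R(H)$ through the generating description provided by \hyperref[prop:R]{Proposition \ref*{prop:R}}, which reduces the problem to checking that the commutators $[a_i,a_j^\psi]^{a_k}[a_i^{a_k},(a_j^{a_k})^\psi]^{-1}$ vanish in $\chi(H)$ for a polycyclic generating set. For each family of groups, I would first record the structural features that simplify these generators, then verify the vanishing either by a direct calculation in $\chi(H)$ or by invoking the order count from the abelian case together with the remark on epimorphic images.

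For case (i), when $H$ is a finite abelian $p$-group with $p$ odd, I would start from the abelian theorem quoted above, which gives $R=[D,H]=[L,2H]$ and $R^2=1$. Since $R$ is an elementary abelian $2$-group sitting inside the $p$-group structure coming from $H$, the order of $R$ must be a power of $p$; but $R^2=1$ forces every element to have order dividing $2$, and for $p$ odd these constraints are compatible only with $R=1$. The main point here is to argue that the $2$-torsion subgroup $R$ is forced to be trivial because $W(H)/R(H)\cong M(H)$ and the Schur multiplier of a finite abelian $p$-group is again a $p$-group of odd order, leaving no room for a nontrivial elementary abelian $2$-part; I would make this order-theoretic comparison precise.

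For case (ii), with $H$ the free nilpotent group of class $2$ on two generators $a_1,a_2$, I would use the fact that $H'$ is central and generated by $[a_1,a_2]$, so conjugation by any $a_k$ acts trivially on the commutator subgroup and nearly trivially on the generators modulo the center. The key computation is to expand $[a_i,a_j^\psi]^{a_k}[a_i^{a_k},(a_j^{a_k})^\psi]^{-1}$ using \hyperref[lema:R]{Lemma \ref*{lema:R} (v)} together with the relation $a_i^{a_k}=a_i[a_i,a_k]$ where $[a_i,a_k]$ is central; the centrality should make the two factors cancel identically, so I would carefully track how central correction terms behave under $\psi$ and under the defining relations of $\chi(H)$.

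For case (iii), with $H$ metacyclic, I would use the presentation $H=\langle a,b\mid b^m=1,\,a^{-1}ba=b^r,\,a^n=b^s\rangle$ (or the infinite analogue) so that $H$ has a cyclic normal subgroup with cyclic quotient. The strategy is to reduce the generators of $R(H)$ from \hyperref[prop:R]{Proposition \ref*{prop:R}} to the single relevant commutator type and to show, via the known triviality of the Schur multiplier of metacyclic groups, that $W(H)/R(H)\cong M(H)=1$ combined with an independent argument that $W(H)$ itself is trivial or that all the defining generators collapse. The hard part across all three cases will be case (iii): unlike the abelian situation, the noncommutativity obstructs a clean order count, so the main obstacle is controlling the conjugation action of the infinite-order generator $a$ on the $\psi$-twisted commutators and verifying that the metacyclic relations are strong enough to force each generator of $R(H)$ to reduce to the identity. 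I would isolate this computation as the technical core and handle it by explicit commutator manipulation modulo the relations of $\chi(H)$.
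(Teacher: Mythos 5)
Your proposal has genuine gaps in all three cases, the most serious being a circularity in case (ii). Lemma \ref{lema:R}(v) is a congruence that holds only \emph{modulo} $R(H)$: the elements $[a_i,a_j^{\psi}]^{a_k}[a_i^{a_k},(a_j^{a_k})^{\psi}]^{-1}$ are trivial mod $R(H)$ essentially by the definition of $R(H)$ (that is the content of Proposition \ref{prop:gerR}), so ``expanding the generators using Lemma \ref{lema:R}(v)'' can only ever show they vanish modulo $R(H)$, which proves nothing. What is needed, and what the paper uses, are identities holding \emph{exactly} in $\chi(H)$: $[h,h^{\psi}]=1$, its consequence $[h_1,h_2^{\psi}]=[h_1^{\psi},h_2]$, and Lemma \ref{lema:R}(iii)--(iv). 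With these one computes, for instance, $[a_1,a_2^{\psi}]^{a_1}=[a_1^{\psi},a_2]^{a_1}=[a_1^{\psi},a_2^{a_1}]=[a_1,(a_2^{a_1})^{\psi}]$ (conjugation by $a_1$ fixes $a_1^{\psi}$), so the generator with conjugator $a_1$ collapses; similarly for $a_2$; and for the conjugator $[a_1,a_2]$ one uses that $[a_1,a_2]^{-1}[a_1,a_2^{\psi}]$ lies in $L(H)$, which is centralized by $D(H)$. Note also that the paper does not invoke Proposition \ref{prop:R} in case (ii): it first shrinks the generating set via McDermott's theorem to conjugators $b\in\{a_1,a_2,[a_1,a_2]\}$ acting on the single commutator $[a_1,a_2^{\psi}]$, thereby avoiding the extra generators (those with $[a_1,a_2]$ occurring \emph{inside} the commutator) that a three-element polycyclic generating sequence would force you to treat.

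Case (iii) rests on a false ingredient: metacyclic groups do not in general have trivial Schur multiplier ($C_p\times C_p$ is metacyclic and $M(C_p\times C_p)\cong C_p$), and for the same reason your fallback hope that $W(H)=1$ fails, since $W(H)/R(H)\cong M(H)$ would then force $M(H)=1$. The paper's actual mechanism is far simpler than the ``technical core'' you anticipate: a metacyclic group admits a polycyclic generating sequence of length two, so in Proposition \ref{prop:R} every generator of $R(H)$ has its conjugator $a_k$ equal to one of the two entries $a_i,a_j$, and exactly the same short exact-identity computation as in case (ii) kills all of them; the metacyclic hypothesis is used \emph{only} to get a two-element polycyclic sequence, and no control of the action of an infinite-order generator is ever needed. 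Finally, in case (i) your core idea (exponent dividing $2$ from Theorem 4.2.1(iii) versus odd order) is sound, but the justification that $|R(H)|$ is a power of $p$ cannot come from $W(H)/R(H)\cong M(H)$ being a $p$-group --- that constrains the quotient, not $R(H)$ itself; you would need that $\chi(H)$ is a finite $p$-group (e.g.\ finiteness and nilpotency preservation, plus generation by the two $p$-subgroups $H$ and $H^{\psi}$), whereas the paper simply cites Theorem 4.2.4 of Sidki.
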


\begin{proof}
\begin{enumerate}

\item[(i)] This case was shown in Theorem 4.2.4 of \cite{sidki-wp}.

\item[(ii)] Let $S=\left\{ a_{1},a_{2}\right\} $ be a generating set for $H$. Then $%
H/H^{^{\prime }}=\left\langle H^{^{\prime }}a_{1},H^{^{\prime
}}a_{2}\right\rangle $ and $\ H^{^{\prime }}=\left\langle
[a_{1},a_{2}]\right\rangle $.

Let $\ K=\left\langle [h_{1},h_{2}^{\Psi
}]^{h_{3}}[h_{1}^{h_{3}},(h_{2}^{h_{3}})^{\Psi }]^{-1}|h_{1},h_{2},h_{3}\in
H\right\rangle ^{H\ast H^{\Psi }}$ in the group $H\ast H^{\Psi }$. By \cite[p. 37]{mcdermott} we can simplify the generating set of $K$ to 
\begin{equation*}
K=\left\langle [a_{1},a_{2}^{\Psi }]^{b}[a_{1}^{b},(a_{2}^{b})^{\Psi
}]^{-1}\mid b\in S\cup \{[a_{1},a_{2}]\}\right\rangle ^{H\ast H^{\Psi }}.
\end{equation*}%
Taking the natural epimorphism $\phi :H\ast H^{\Psi }\rightarrow \chi (H)$ ,
it is easy see that 
\begin{equation*}
R(H)=\phi (K)=\left\langle [a_{1},a_{2}^{\Psi
}]^{b}[a_{1}^{b},(a_{2}^{b})^{\Psi }]^{-1}\mid b\in
\{a_{1},a_{2},[a_{1},a_{2}]\}\right\rangle ^{\chi (H)}.
\end{equation*}%
Now, the following relations hold in $\chi (H)$,%
\begin{eqnarray*}
\lbrack a_{1},a_{2}^{\Psi }]^{a_{1}} &=&[a_{1}^{\Psi
},a_{2}]^{a_{1}}=[a_{1}^{\Psi },a_{2}^{a_{1}}]\text{,} \\
\lbrack a_{1},a_{2}^{\Psi }]^{a_{2}} &=&[a_{1}^{a_{2}},a_{2}^{\Psi }]\text{.}
\end{eqnarray*}%
Since $\gamma _{3}(H)=1$, it follows that  
\begin{equation*}
\lbrack a_{1},a_{2}{}^{\Psi }]^{[a_{1},a_{2}]}=[a_{1},a_{2}{}^{\Psi
}]^{[a_{1},a_{2}{}^{\Psi }]}=[a_{1},a_{2}{}^{\Psi }]\text{..}
\end{equation*}%
\textbf{\ }

Thus, $R(H)$ is trivial.

\item[(iii)] Let $S=\{a_{1},\,a_{2}\}$ be a polycyclic generators for $H$. By 
\hyperref[prop:R]{Proposition \ref*{prop:R}} we can take 
\begin{equation*}
R(H)={\left\langle [a_{i},a_{j}^{\psi }]^{a_{k}}[{a_{i}}^{a_{k}},({a_{j}}%
^{a_{k}})^{\psi }]^{-1}\mid a_{i},\;a_{j},\;a_{k}\,\in S\right\rangle }^{T},
\end{equation*}%
were $T$ is any transversal for $H/H^{\prime }$. In the group $\chi (H)$ the
following relations hold, 
\begin{eqnarray*}
\lbrack a_{1},{a_{2}}^{\psi }]^{a_{1}} &=&[{a_{1}}^{\psi },a_{2}]^{a_{1}}=[{%
a_{1}}^{\psi },a_{2}^{a_{1}}] \\
&=&[{a_{1}},(a_{2}^{a_{1}})^{\psi }], \\
\lbrack a_{1},{a_{2}}^{\psi }]^{a_{2}} &=&[{a_{1}}^{a_{2}},(a_{2})^{\psi }]%
\text{.}
\end{eqnarray*}%
Thus, $R(H)$ is trivial.

\end{enumerate}
\end{proof}

\end{document}